\documentclass[11pt]{amsart}
\usepackage{amsmath, amssymb, latexsym, amsthm,bm}
\usepackage{mathrsfs}
\usepackage{color}
\usepackage{graphicx}
\usepackage{mathscinet}

\usepackage{caption}
\captionsetup{font=footnotesize}

\usepackage{amsrefs}


\addtolength{\textheight}{0.5cm}
\addtolength{\footskip}{\baselineskip}





\def\ignore#1{{ }}

\def\blueit#1{{\textcolor{blue}{#1}}}

\def\sphere{{S}}


\newtheorem{theorem}{Theorem} 
\newtheorem{theorem*}{Theorem} 
\newtheorem{proposition}[theorem]{Proposition} 

\newtheorem{lemma}[theorem]{Lemma}

\newtheorem{conjecture}[theorem]{Conjecture}
\newtheorem{observation}[theorem]{Observation}

\def\rat{{\leadsto}}



\addtolength{\oddsidemargin}{-.875in}
	\addtolength{\evensidemargin}{-.875in}
	\addtolength{\textwidth}{1.75in}

	\addtolength{\topmargin}{-.2in}
	\addtolength{\textheight}{0.2in}

\def\dd{\mathscr D}



\begin{document}

\author{Carolina Medina}
\address{Department of Mathematics, University of California, Davis. Davis, CA 95616, USA}
\email{\tt cmedina@math.ucdavis.edu}
\thanks{The first author was supported by a Fulbright Visiting Scholar fellowship at UC Davis.}

\author{Gelasio Salazar}
\address{Instituto de F\'\i sica, Universidad Aut\'onoma de San Luis Potos\'{\i}. San Luis Potos\'{\i}, Mexico.}
\email{\tt gsalazar@ifisica.uaslp.mx}

\title[Crossing exchanges and smoothings on links]{{When can a link be obtained from another \\ using crossing exchanges and smoothings?}}

\begin{abstract}
Let $L$ be a fixed link. Given a link diagram $D$, is there a sequence of crossing exchanges and smoothings on $D$ that yields a diagram of $L$? We approach this problem from the computational complexity point of view. It follows from work by Endo, Itoh, and Taniyama that if $L$ is a prime link with crossing number at most $5$, then there is an algorithm that answers this question in polynomial time. We show that the same holds for all torus links $T_{2,m}$ and all twist knots.
\end{abstract}

\date{\today}

\subjclass[2010]{57M25}

\maketitle

\ignore{
\def\gg{{\mathcal G}}
\def\pp{{\mathscr P}}
\def\ll{{\mathscr L}}
\def\oo{{\mathscr O}}
\def\ff{{\mathscr F}}
\def\uu{{\mathscr U}}
\def\ii{{\mathscr I}}
\def\ii{{\mathscr I}}
\def\hh{{\mathscr H}}
\def\hh{{\mathcal H}}
\def\gag{{\mathscr G}}
\def\bb{{\mathscr B}}
\def\ss{{\mathscr S}}
\def\ss{{\mathcal S}}
\def\dd{\mathcal D}
\def\ee{\mathcal E}
\def\rr{\mathcal R}
\def\ww{\mathcal W}
\def\qq{\mathcal Q}
\def\nn{\mathcal N}
\def\dd{\mathcal D}
\def\ee{\mathcal E}
\def\rr{\mathcal R}
\def\ww{\mathcal W}
\def\qq{\mathcal Q}
\def\nn{\mathcal N}
\def\aa{{\mathscr A}}
\def\aaa{{\mathscr A}}
\def\jj{{\mathscr J}}
\def\kk{{\mathscr K}}
\def\tll{{\mathscr L}}
\def\mm{{\mathscr M}}
\def\mm{{\mathcal M}}
\def\nn{{\mathscr N}}
\def\bb{{\mathscr B}}
\def\bb{{\mathcal B}}
\def\cc{{\mathscr C}}
\def\cc{{\mathcal C}}
\def\ee{{\mathscr D}}
\def\ee{{\mathscr E}}
\def\ee{{\mathcal E}}
\def\uu{{\mathscr U}}
\def\vv{{\mathscr V}}
\def\ww{{\mathscr W}}
\def\xx{{\mathscr X}}
\def\yy{{\mathscr Y}}
\def\zz{{\mathscr Z}}
}

\def\aa{{\mathscr A}}
\def\bb{{\mathscr B}}
\def\cc{{\mathscr C}}
\def\hh{{\mathscr H}}
\def\mm{{\mathscr M}}
\def\ss{{\mathscr S}}
\def\pp{{\mathscr P}}
\def\qq{{\mathscr qq}}
\def\gg{{\mathcal G}}
\def\ll{{\mathscr L}}

\section{Introduction}\label{sec:intro}

We work in the piecewise linear category. All links under consideration are nonsplit, unordered, unoriented and contained in the 3-sphere $\sphere^3$. We remark that when we speak of a link $L$ we include the possibility that $L$ is a link with only one component, that is, a knot. All diagrams under consideration are regular diagrams in the $2$-sphere $\sphere^2\subset \sphere^3$. 

This work revolves around the following basic question. Let $L$ be a fixed link. Given a link diagram $D$, does there exist a sequence of crossing exchanges and smoothings on $D$ that yields a diagram of $L$? If this is the case, then for brevity we write $D \,\rat L$.

\begin{figure}[ht!]
\centering
\scalebox{0.4}{\input{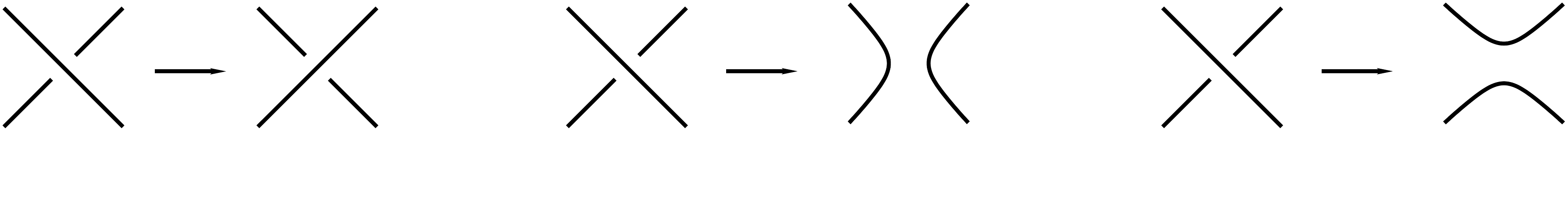_t}}
\caption{In (a) we illustrate a crossing exchange operation, and in (b) and (c) the two crossing smoothing operations.}
\label{fig:Fig1}
\end{figure}

One wonders how ``difficult'' this question is. To formalize this, we need to work under the computational complexity setting, posing this question in the standard form of a decision problem. 
\medskip

\noindent{\bf Problem: }{\sc $\rat L$} (where $L$ is a fixed link)

\medskip

\noindent{\bf Instance: }{A link diagram $D$.}

\medskip

\noindent{\bf Question: }{Is it true that $D\,\rat L$?}

\medskip

We conjecture that for each fixed link $L$, the decision problem $\rat L$ is tractable, that is, there is a polynomial-time algorithm that solves $\rat L$. We recall that this means that there exist an algorithm $\aa(L)$ and a polynomial $p(n)$ such that the following holds. For each diagram $D$ with $n$ crossings, the algorithm $\aa(L)$ decides whether or not $D\,\rat L$ in at most $p(n)$ time steps. 

\begin{conjecture}\label{con:con1}
For each fixed link $L$, there is a polynomial time algorithm that solves $\rat L$.
\end{conjecture}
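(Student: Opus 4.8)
The starting point is a structural reduction. Crossing exchanges and smoothings act locally: an exchange at a crossing $c$ leaves the rest of the diagram untouched, and smoothing $c$ deletes $c$ without disturbing the other crossings. Hence operations at distinct crossings commute, and any sequence can be rearranged so that it first performs a set of smoothings and then a set of exchanges; moreover exchanging before smoothing the same crossing has the same effect as just smoothing it. Therefore the diagrams reachable from $D$ are precisely those obtained by choosing a subset $S$ of the crossings of $D$, choosing one of the two smoothing types at each crossing of $S$, smoothing accordingly to obtain a $4$-regular plane graph $G$, and then decorating the remaining crossings of $G$ with arbitrary over/under information. Call such a $G$ a \emph{partial smoothing} of the shadow of $D$, and call a $4$-regular plane graph an \emph{$L$-shadow} if some crossing assignment on it is a diagram of $L$. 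Then $D \rat L$ holds if and only if some partial smoothing of the shadow of $D$ is an $L$-shadow.

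This isolates two sub-problems. The first is to recognize $L$-shadows in polynomial time; this is where the fixed link $L$ enters, and it is a refined version of the (also conjecturally tractable) problem of recognizing which diagrams represent $L$. The second is to search over partial smoothings efficiently, since there are exponentially many choices of $S$ and of smoothing types. Here I would look for a monotonicity or locality phenomenon that collapses the search. For $L$ the unknot, for instance, every connected knot shadow is an unknot-shadow (order the crossings by first visit along the curve and make each first visit an overpass, obtaining a descending, hence trivial, diagram), so $D \rat L$ reduces to ``the shadow of $D$ is connected'', which is polynomial; and for prime links of crossing number at most $5$ one invokes the Endo--Itoh--Taniyama analysis. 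The combinatorial heart, in general, is to show that $L$-realizability of a partial smoothing can be certified by the presence of a pattern of bounded size, or is monotone under un-smoothing up to a crossing exchange that renders the new crossing removable.

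To obtain the results announced in the abstract I would make this concrete for the two families at hand. Both the torus links $T_{2,m}$ and the twist knots are built from a single twist region (for twist knots, a clasp together with a twist region), so one expects that a $4$-regular plane graph is a $T_{2,m}$-shadow, respectively a twist-knot-shadow, exactly when, after a suitable partial smoothing, it contains a correspondingly structured block and is otherwise unconstrained. Necessity of such a pattern is straightforward; the real work — and the main obstacle in this part — is sufficiency: one must show that any superfluous topology elsewhere in the shadow can be destroyed by crossing exchanges and smoothings away from the block, using descending-diagram and Reidemeister-move arguments, so that the pattern is not merely necessary but enough. Granting the characterization, one then checks that detecting the required block in the shadow of $D$ (allowing partial smoothings) can be carried out in time polynomial in the number of crossings of $D$, yielding the polynomial-time algorithm for $\rat T_{2,m}$ and for $\rat L$ with $L$ a twist knot. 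The obstruction to the full Conjecture~\ref{con:con1} is precisely the absence of such an explicit, efficiently testable description of $L$-shadows for an arbitrary fixed link $L$: this would in particular require a polynomial-time handle on unknot-type recognition questions that are currently only known to lie in $\mathrm{NP}\cap\mathrm{coNP}$.
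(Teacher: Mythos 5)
You set out to prove a statement that the paper itself presents only as a conjecture: the paper does not prove Conjecture~\ref{con:con1}, and neither do you. Your opening reduction is fine as far as it goes (the diagrams reachable from $D$ are indeed obtained by smoothing a subset of the crossings, each in one of the two ways, and assigning arbitrary over/under data to the rest), but everything after that is a program rather than an argument. The two sub-problems you isolate --- recognizing ``$L$-shadows'' in polynomial time, and collapsing the exponential search over partial smoothings --- are exactly where all the difficulty sits, and you leave both open: you supply no mechanism by which $D\,\rat L$ can be certified by a bounded-size pattern, and no proof that ``superfluous topology elsewhere in the shadow'' can always be destroyed. So as a proof of the conjecture, or even of the two families announced in the abstract (Theorem~\ref{thm:main1}), there is a genuine gap: the sufficiency direction you yourself flag as ``the real work'' is precisely what is missing, and the unknot example and the appeal to Endo--Itoh--Taniyama for crossing number at most $5$ do not substitute for it.

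It is also worth noting that the paper's actual route for the two families is quite different and sidesteps your search problem entirely. The key tool is Proposition~\ref{pro:17} from~\cite{endo}: if some Tait graph of a projection of $L$ is a minor of a Tait graph of the projection of $D$, then $D\,\rat L$. After reducing to strong diagrams with many crossings (Proposition~\ref{pro:pro1}), whose Tait graphs are $2$-connected plane graphs with many edges, the paper proves purely graph-theoretic extremal statements (via Observation~\ref{obs:planar} and planar duality): every sufficiently large $2$-connected plane graph contains $C_m$ or $B_m$ as a minor, and contains $C_{m-1}^+$ or $B_{m-1}^+$ as a minor unless it is a cycle or a bond. Consequently, for large strong diagrams the answer to $\rat T_{2,m}$ is always ``yes'', and the answer to $\rat T_m$ is ``yes'' except when the projection is that of a crossing-minimal $T_{2,n}$ diagram (Lemmas~\ref{lem:torus} and~\ref{lem:twist}); no search over partial smoothings and no recognition of $L$-shadows is ever performed. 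If you want to pursue your framework, the minor condition of Proposition~\ref{pro:17} is exactly the kind of efficiently testable sufficient certificate your sketch lacks --- but bear in mind it is only sufficient, not necessary, which is why the paper proves extremal ``always contains the minor'' statements for large diagrams rather than the characterization of $L$-shadows your plan would require.
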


We start by noting that the available evidence backs up Conjecture~\ref{con:con1}. It is easy to see that it follows from results by Endo, Itoh, and Taniyama~\cite{endo} that Conjecture~\ref{con:con1} holds for each prime link with crossing number at most five:

\medskip
\noindent{\bf Theorem. }(Follows from~\cite[Theorems 2.4--2.10]{endo}) {\em Let $L$ be a fixed prime link with crossing number at most $5$. Then there is a polynomial time algorithm that solves $\rat L$.}
\medskip

\subsection{Our main result} 

We give further evidence to the plausibility of Conjecture~\ref{con:con1}, by showing that it holds for two important infinite classes of links, namely torus links $T_{2,m}$ and twist knots $T_m$. We remark that an important motivation to investigate crossing smoothing operations (an instance of band surgery) comes from current research in molecular biology. As explained in~\cite[Section 2.2]{mariel}, torus links $T_{2,m}$ and two-bridge links (such as twist knots) are especially relevant in the biological context for mechanistic reasons.

\begin{theorem}\label{thm:main1}
If $L$ is either a torus link or a twist knot, then there is a polynomial time algorithm that solves {$\rat L$}.
\end{theorem}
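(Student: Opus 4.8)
My plan is to recast the problem in the language of \emph{signed plane graphs}. Fix a checkerboard colouring of a link diagram $D$ and let $G_D$ be its Tait graph: one vertex for each white region, one edge for each crossing (joining the two incident white regions), and a sign on each edge recording the type of the crossing. Tait's construction is a bijection between connected link diagrams equipped with a checkerboard colouring and connected signed plane graphs, and under it the two moves of interest become elementary graph operations: a crossing exchange at a crossing $c$ simply flips the sign of the edge $c$, while the two smoothings at $c$ are precisely the deletion and the contraction of the edge $c$. Since a crossing exchange is therefore ``free'' (we may re-sign edges at will), and since a diagram of the nonsplit link $L$ has a connected Tait graph, we obtain the following reformulation: $D\,\rat L$ holds if and only if $G_D$ has a connected minor $H$ that, for \emph{some} assignment of signs, is a Tait graph of $L$. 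Writing $\mathscr T(L)$ for the set of (unsigned) plane graphs that arise as Tait graphs of diagrams of $L$, this reads simply: $D\,\rat L$ iff $G_D$ has a minor in $\mathscr T(L)$.

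By the graph‑minor theorem of Robertson and Seymour, graphs are well‑quasi‑ordered by the minor relation, so $\mathscr T(L)$ has only finitely many minor‑minimal members, say $H_1,\dots,H_r$, and then $D\,\rat L$ holds iff $G_D$ has one of $H_1,\dots,H_r$ as a minor. Since testing whether a fixed graph is a minor of a given $n$‑vertex graph takes polynomial time, this already shows, non‑constructively, that $\rat L\in\mathrm P$ for \emph{every} link $L$; what Theorem~\ref{thm:main1} asks for, and what the special structure of the two families will supply, is an \emph{explicit} description of the $H_i$. (For the patterns that occur below the minor tests are in any case completely elementary.)

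For a torus link $T_{2,m}$ with $m\ge 2$ I expect the minor‑minimal members of $\mathscr T(T_{2,m})$ to be exactly the $m$‑cycle $C_m$ and its planar dual $\Theta_m$, the graph with two vertices and $m$ parallel edges, the only minimal‑crossing diagram of $T_{2,m}$ being the standard alternating one, which carries precisely these two Tait graphs; the cases $|m|\le 1$ are trivial, since $T_{2,1}$ is the unknot and $T_{2,0}$ is split. Granting this, $D\,\rat T_{2,m}$ holds iff $G_D$ has $C_m$ as a minor, i.e.\ contains a cycle of length at least $m$, or $G_D$ has a $\Theta_m$‑minor --- two conditions that, for the fixed value of $m$, are decidable in polynomial time by elementary means. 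For a twist knot $T_m$ the picture is the same: the standard alternating diagram has $m+2$ crossings, and its two Tait graphs are $\Psi_m$, obtained from $\Theta_{m+1}$ by subdividing one edge once, and its planar dual, the cycle $C_{m+1}$ with one edge doubled; once these are shown to be the minor‑minimal members of $\mathscr T(T_m)$, one again tests $D\,\rat T_m$ in polynomial time by checking for one of two fixed minors.

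The crux of the argument is the verification that, for each $L$ in the two families, $\mathscr T(L)$ has \emph{precisely} the claimed minor‑minimal members. One inclusion is immediate: any proper minor of one of the listed graphs has strictly fewer than $\ucr(L)$ edges, hence fewer than $\ucr(L)$ crossings, and so cannot be a Tait graph of $L$. The other inclusion --- that \emph{every} Tait graph of $L$ has one of the listed graphs as a minor, equivalently that every diagram of $L$ can be brought to a standard minimal‑crossing diagram using only crossing exchanges and smoothings --- is where the hypothesis that $T_{2,m}$ and the twist knots are two‑bridge must be used: one analyses how the rational‑tangle (continued‑fraction) data of a two‑bridge diagram degenerates under smoothings and crossing exchanges and reads off the required reductions, with the Endo--Itoh--Taniyama results serving as the base of the induction. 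It is exactly this last inclusion that is unavailable for a general link, which is why Conjecture~\ref{con:con1} remains open beyond the non‑constructive observation above.
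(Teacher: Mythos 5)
Your reformulation is the right starting point, and both halves of it are indeed available: the ``only if'' half because a smoothing deletes or contracts the corresponding Tait-graph edge while an exchange merely flips a sign, and the ``if'' half is exactly Proposition~\ref{pro:17} (Endo--Itoh--Taniyama). But as written the argument has two genuine gaps. First, the Robertson--Seymour step does not go through: the equivalence ``$D\,\rat L$ iff $G_D$ has a minor in $\mathscr{T}(L)$'' is an equivalence for \emph{plane} graphs, with contractions respecting the embedding. The same abstract graph embedded differently in $\sphere^2$ is in general the Tait graph of a \emph{different} link (re-embedding corresponds to flipping tangles, i.e.\ mutation-type moves), so finding an abstract minor isomorphic to a Tait graph of $L$ does not yield $D\,\rat L$. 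The well-quasi-ordering theorem and the cubic minor-testing algorithm you invoke are statements about abstract minors; for the embedded-minor order you would need a surface-minor WQO theorem and an embedded-minor test, neither of which you supply. (If the paragraph worked as stated it would settle Conjecture~\ref{con:con1} for every link, which the paper explicitly leaves open.) Second---and this is the step you yourself call the crux---the identification of the minor-minimal members of $\mathscr{T}(L)$ is only asserted: the claim that \emph{every} Tait graph of \emph{every} diagram of $T_{2,m}$ contains $C_m$ or $B_m$ as a minor (and its analogue for twist knots) is precisely the hard direction, and ``analyse the continued-fraction data, with Endo--Itoh--Taniyama as the base of an induction'' is a plan, not a proof. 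Without it, the ``no'' answers of your algorithm are unjustified, so Theorem~\ref{thm:main1} is not yet proved on this route.

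For comparison, the paper never attempts to describe $\mathscr{T}(L)$ or its minimal elements, and only uses the one-directional Proposition~\ref{pro:17}. It first reduces to strong diagrams with many crossings (Proposition~\ref{pro:pro1}), using primality of $L$ to split a non-strong diagram into strong pieces and decidability of $\rat L$ to handle the boundedly many small cases in constant time. Then elementary planar graph theory does the rest: every large $2$-connected plane graph contains $C_m$ or $B_m$ as a minor (Observation~\ref{obs:planar} and Proposition~\ref{pro:torus}), and contains $C_{m-1}^+$ or $B_{m-1}^+$ unless it is a cycle or a bond (Proposition~\ref{pro:twist}). Hence on large strong diagrams the answer to $\rat T_{2,m}$ is always ``yes'' (Lemma~\ref{lem:torus}), and the answer to $\rat T_m$ is ``yes'' exactly when the projection is not a minimal $T_{2,n}$ projection (Lemma~\ref{lem:twist}); no analysis of the diagrams of $L$ itself is required. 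If you want to salvage your approach, the statement you must actually prove is your ``other inclusion,'' and for these two families it is more easily extracted from the structural facts above than from rational-tangle bookkeeping.
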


\begin{figure}[ht!]
\centering
\scalebox{0.58}{\input{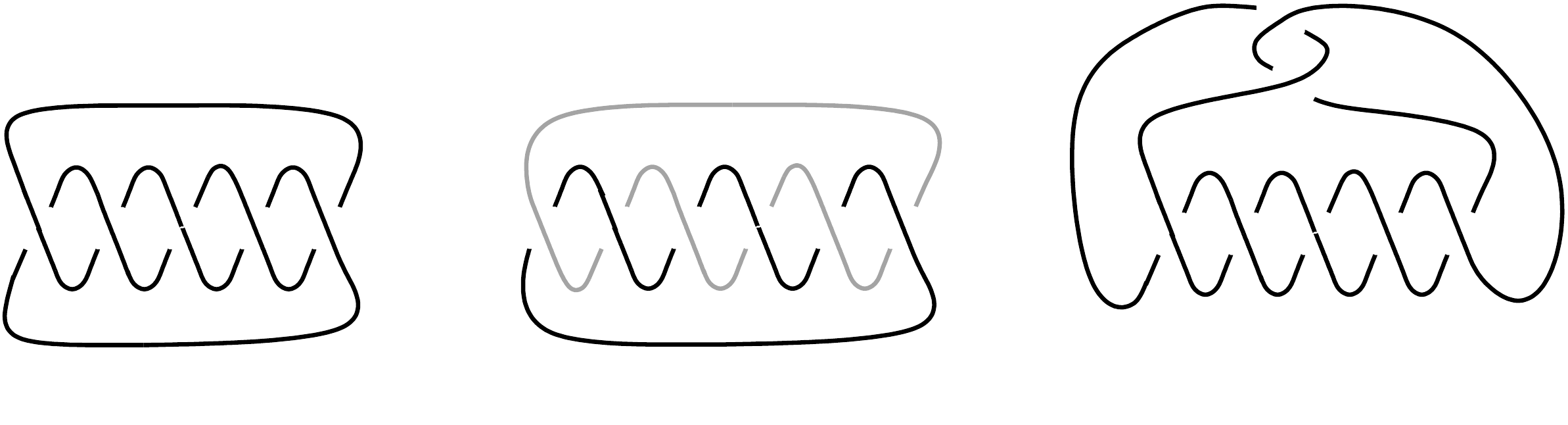_t}}
\caption{In (a) we illustrate the torus knot $T_{2,5}$, and in (b) the torus link $T_{2,6}$. In general, $T_{2,m}$ has crossing number $m$, and we recall that $T_{2,m}$ is a knot if and only if $m$ is odd. In (c) we illustrate the twist knot $T_7$. For each integer $m\ge 3$, the twist knot $T_m$ has crossing number $m$.}
\label{fig:Fig57}
\end{figure}

\subsection{Related work}

Given a link $L$ and a diagram $D$, the question of whether or not $D\,\rat L$ arises in the context of defining an order in the collection $\ll$ of all links. If $L_1$ and $L_2$ are links, write $L_1\preceq L_2$ if every diagram of $L_2$ can be transformed into a diagram of $L_1$ by a (perhaps empty) sequence of crossing exchanges and smoothings. As shown in~\cite{endo}, the relation $\preceq$ (the {\em smoothing order}) is a pre-order in $\ll$, and it is a partial order on the set of all prime alternating links. 

In general, it is quite natural to ask which knots, or knot projections, are related under some set of local operations. We refer the reader to~\cite{thirtytwo} for a recent work in this theme.

\ignore{The question of whether or not two link diagrams are related by a sequence of crossing exchanges and/or smoothings is by itself a quite natural question and, \blueit{as we discuss below}, is particularly relevant nowadays given its relationship to current research in molecular biology.}

In~\cite{taniyama}, Taniyama gave characterizations of when a fixed knot $K$ with crossing number at most $5$ can be obtained from a diagram $D$ by a sequence of crossing exchanges. This was recently extended by Takimura~\cite{sixtwo} for the case in which $K$ is the knot $6_2$. In~\cite{taniyama2}, Taniyama gave characterizations for $2$-component links. We refer the reader to~\cite{fertility} and~\cite{hanaki1} for further results in this direction. A related notion investigated in~\cite{hanaki0,hanaki} is the trivializing number of a knot.

Crossing smoothings are an instance of band surgery operations. Another band surgery-related operation on diagrams is a {\em band-move} or $H(2)$-{\em move}~\cite{hoste}. In~\cite{abe}, Abe, Hanaki, and Higa introduced the {\em band-unknotting number} $u_b(K)$ of a knot $K$, which is the minimum number of band-moves required to deform a diagram of $K$ into a diagram of the unknot. We also refer the reader to~\cite{abek} and~\cite{kan1,kan2,kan3}.

Another question related to the main theme of our work is whether or not a given knot has a diagram for which smoothing a single crossing results in a diagram of its mirror image. This problem was recently investigated by Livingston~\cite{livingston} and by Moore and Vazquez~\cite{mariel2}.

\ignore{
\blueit{As we mentioned below, a motivation to investigate crossing exchanges and smoothings arises from current research in molecular biology. It is known that in the process of DNA recombination, a link is obtained from another by a certain type of crossing smoothing. Moreover, in experimental observations obtained in a laboratory sometimes only a projection of a link is known (that is, without over/under information). Thus, in order to develop a systematic framework to approach problems of interest to molecular biologists, it makes sense to start with the simplest possible question, which is precisely to understand the (structural or computational) complexity of deciding whether a link can obtained from another by a sequence of crossing exchanges and/or smoothings.}
}

\section{Reducing Theorem~\ref{thm:main1} to large and sufficiently connected diagrams}\label{sec:red1}

Our aim in this section is to show that it suffices to prove Theorem~\ref{thm:main1} for the case in which the input diagram $D$ has ``many'' crossings, and satisfies a certain connectivity property. 

\ignore{The precise meaning of ``sufficiently large'' is the following. Suppose that the fixed link $L$ has crossing number $m$. We will show that there is an integer $n:=n(m)$ such that the following holds. If Theorem~\ref{thm:main0} (respectively, Theorem~\ref{thm:main1}) holds when the input diagram $D$ has crossing number at least $n$, then it holds for every input diagram $D$.}

We recall that a {\em projection} $P$ is obtained from a diagram $D$ by omitting from $D$ the over/under information at each crossing. We say that $P$ {\em is the projection of $D$}.

For an illustration of the next notion we refer the reader to Figure~\ref{fig:Fig60}. {We say that a projection is {\em strong} if there is no simple closed curve $\gamma$ such that (i) $\gamma$ intersects $P$ in exactly two points, which are noncrossing points of $P$; and (ii) each component of $\sphere^2\setminus \gamma$ contains at least one crossing point of $P$.}

\ignore{
\blueit{We say that a pair $\{x,y\}$ of noncrossing points $x,y$ of a projection $P$ is {\em distant} if, when we regard $P$ as a four-valent plane graph, $x$ and $y$ do not belong to the same edge of $P$.} A projection $P$ is \blueit{{\em strong}} if for any pair $x,y$ of \blueit{distant} noncrossing points of $P$, we have that $P\setminus\{x,y\}$ is connected. Note that, in particular, if $P$ has a crossing point $z$ such that $P\setminus\{z\}$ is disconnected, then $P$ is not strong, as then there exist noncrossing points $x,y$ very close to $z$ whose removal disconnects $P$. 
}

\begin{figure}[ht!]
\centering
\scalebox{0.9}{\input{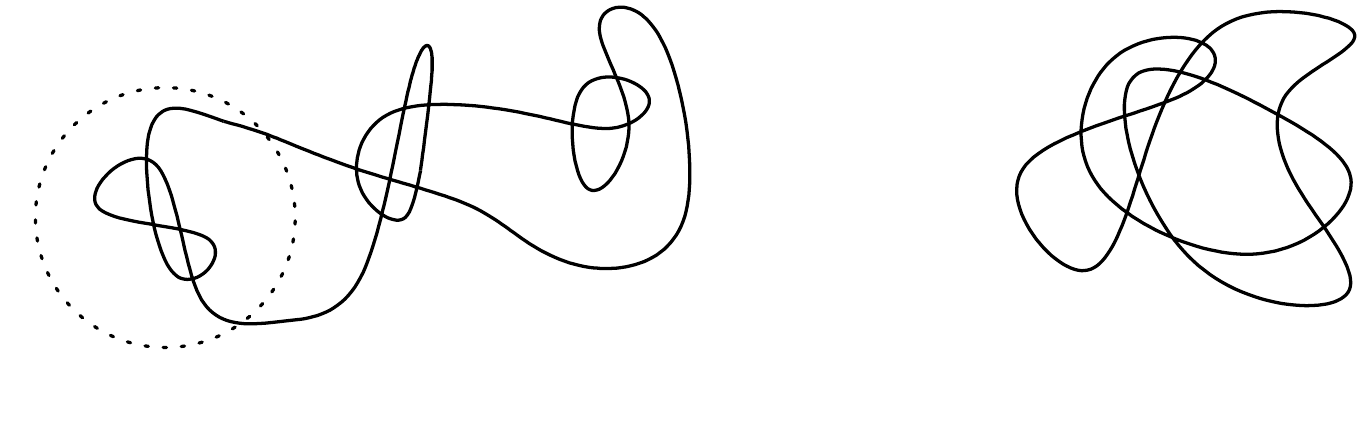_t}}
\caption{The curve $\gamma$ witnesses that the projection in (a) is not strong. It is easy to verify that the projection in (b) is strong.}
\label{fig:Fig60}
\end{figure}

A diagram is {\em strong} if its projection is strong. We note that every crossing-minimal diagram of a  prime link is strong.

The following statement is quite useful, as it will allow us to prove Theorem~\ref{thm:main1} under the assumption that the input diagrams of the decision problem $\rat L$ are strong and ``large''.

\begin{proposition}\label{pro:pro1}
Let $L$ be a fixed prime link, and let $n_0$ be a constant. Suppose that there is a polynomial time algorithm that solves $\rat L$ under the assumption that the input diagram $D$ is strong and has at least $n_0$ crossings. Then there is a polynomial time algorithm that solves $\rat L$.
\end{proposition}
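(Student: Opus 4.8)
The plan is to reduce an arbitrary input diagram $D$ to a bounded number of strong diagrams, each of size comparable to $D$, in such a way that $D \rat L$ if and only if at least one of the pieces $\rat L$. The key structural observation is that failure of strongness is witnessed by a curve $\gamma$ meeting the projection $P$ of $D$ in exactly two noncrossing points and separating the crossings of $P$ into two nonempty parts. Such a $\gamma$ realizes $D$ as a ``connected sum along a 2-cut'' of two smaller diagrams $D_1$ and $D_2$ (glue back the two arcs of $D$ cut by $\gamma$ inside each side). The plan is to compute such a $\gamma$ in polynomial time (it amounts to finding a $2$-edge-cut in the $4$-valent plane graph $P$, or rather a pair of edges whose removal disconnects the crossings; this is standard graph connectivity and runs in polynomial time), recurse on $D_1$ and $D_2$, and then decide $\rat L$ on each resulting strong piece using the assumed algorithm $\aa(L)$.

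First I would establish the combinatorial reduction lemma: if $D = D_1 \#_\gamma D_2$ is a diagram obtained by gluing $D_1$ and $D_2$ along the $2$-cut $\gamma$, then $D \rat L$ if and only if either $D_1 \rat L$ or $D_2 \rat L$ (using that $L$ is \emph{prime}). One direction is easy: the arc of $D$ lying in the side corresponding to $D_j$ can be capped off by a trivial arc via crossing exchanges/smoothings performed entirely on the other side — smoothing all crossings on the $D_{3-j}$ side in the appropriate way turns that side into a trivial arc, leaving a diagram of the knot/link represented by $D_j$ (up to adding a trivial strand), and hence if $D_j \rat L$ then $D \rat L$. For the converse, suppose a sequence of crossing exchanges and smoothings on $D$ yields a diagram of $L$. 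The curve $\gamma$ persists (it meets only noncrossing points, so exchanges and smoothings do not destroy it) and exhibits the resulting diagram of $L$ as a connected sum along a $2$-sphere; since $L$ is prime and nonsplit, one of the two sides must represent the trivial arc (an unknotted, unlinked piece), which forces the operations restricted to the \emph{other} side to already produce a diagram of $L$ — i.e. $D_j \rat L$ for that $j$. Making ``one side is trivial'' precise requires care about whether the connected-sum decomposition of the link diagram is along a sphere hitting the link in two points (ordinary connected sum) versus trivial cases; this is exactly where primeness of $L$ is used.

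Next I would set up the recursion and the bookkeeping. Starting from $D$ with $n$ crossings, repeatedly split along $2$-cuts; each split strictly decreases the number of crossings in each piece and never increases total size, so after at most $n$ splits we obtain a collection $D^{(1)}, \dots, D^{(k)}$ of \emph{strong} diagrams with $k \le n$ and $\sum |D^{(i)}| \le n + O(k)$ (the $O(k)$ accounting for the trivial capping arcs, each contributing a bounded number of crossings — in fact zero new crossings if we cap with a simple arc). Then $D \rat L$ iff some $D^{(i)} \rat L$. For those pieces with fewer than $n_0$ crossings we decide $\rat L$ by brute force in constant time (finitely many diagrams of bounded size, each tested against $L$ in $O(1)$); for the pieces with at least $n_0$ crossings we invoke the assumed polynomial-time algorithm. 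The total running time is polynomial: polynomially many splits, each a polynomial-time graph computation, followed by at most $n$ calls to a polynomial-time subroutine.

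The main obstacle I anticipate is the converse direction of the reduction lemma — proving that if $D_1 \#_\gamma D_2 \rat L$ with $L$ prime, then $D_1 \rat L$ or $D_2 \rat L$. The subtlety is that the sequence of operations can act on crossings on \emph{both} sides of $\gamma$, so a priori the result of the operations is not an honest connected sum of what happens on each side. One must argue: the image of $\gamma$ in the final diagram of $L$ still meets it in two noncrossing points and separates $S^2$; hence the $2$-sphere obtained by pushing $\gamma$ slightly into $S^3$ meets $L$ in two points, exhibiting $L$ as a (possibly trivial) connected sum; primeness of $L$ (together with nonsplitness) forces one side to be a trivial ball–arc pair, say the side coming from $D_{3-j}$; then the operations performed on that side can be ``undone'' or rather shown to be irrelevant — more carefully, the operations on the $D_j$ side alone, applied to $D_j$ (with its arc capped trivially), already yield a diagram of $L$, because capping a trivial ball–arc pair does not change the link type. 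Getting this last implication airtight — that the trivial outcome on one side lets us truncate the operation sequence to the other side — is the crux, and I would handle it by a careful isotopy/ball-swallowing argument in $S^3$ rather than purely diagrammatically.
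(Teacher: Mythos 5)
Your proposal follows essentially the same route as the paper's proof: recursively split the diagram along a curve witnessing failure of strongness (the paper's cut-and-repair operation) into strong subdiagrams, use primeness of $L$ to conclude that $D\,\rat L$ if and only if some piece $D_i\,\rat L$, run the assumed polynomial-time algorithm on the pieces with at least $n_0$ crossings, and decide the finitely many possible small strong pieces by a constant-time brute-force (the paper justifies this via decidability of $\rat L$ and finiteness of diagrams with at most $n_0$ crossings). The only difference is one of emphasis: the paper states the primeness-based ``key observation'' without proof, whereas you sketch its justification (persistence of $\gamma$, the two-point sphere, and primeness forcing a trivial ball--arc side) in more detail, which is consistent with what the paper takes for granted.
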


\begin{proof}
Let $L$ be a fixed prime link. We start by supposing that there is a polynomial time algorithm that solves $\rat L$ when restricted to strong diagrams. That is, there is an algorithm $\aa(L)$ and a polynomial $p(n)$ such that, when the input diagram $D$ is strong, $\aa(L)$ decides whether or not $D\,\rat L$ in at most $p(n)$ time steps, where $n$ is the number of crossings in $D$. We will show that then there is a polynomial time algorithm that solves $\rat L$. 

Let $D$ be an arbitrary  diagram with $n$ crossings. If $D$ is not strong, then we recursively decompose $D$ by using the cut-and-repair operation illustrated in Figure~\ref{fig:Fig63}, until we finally obtain a collection $D_1,D_2,\ldots,D_k$ of strong subdiagrams of $D$, where $D_i$ has $n_i$ crossings for each $i=1,\ldots,k$, and $\sum_{i=1}^k n_i  = n$. This decomposition of $D$ into strong subdiagrams can clearly be performed in a number of steps bounded by a polynomial function of $n$.

\begin{figure}[ht!]
\centering
\scalebox{0.9}{\input{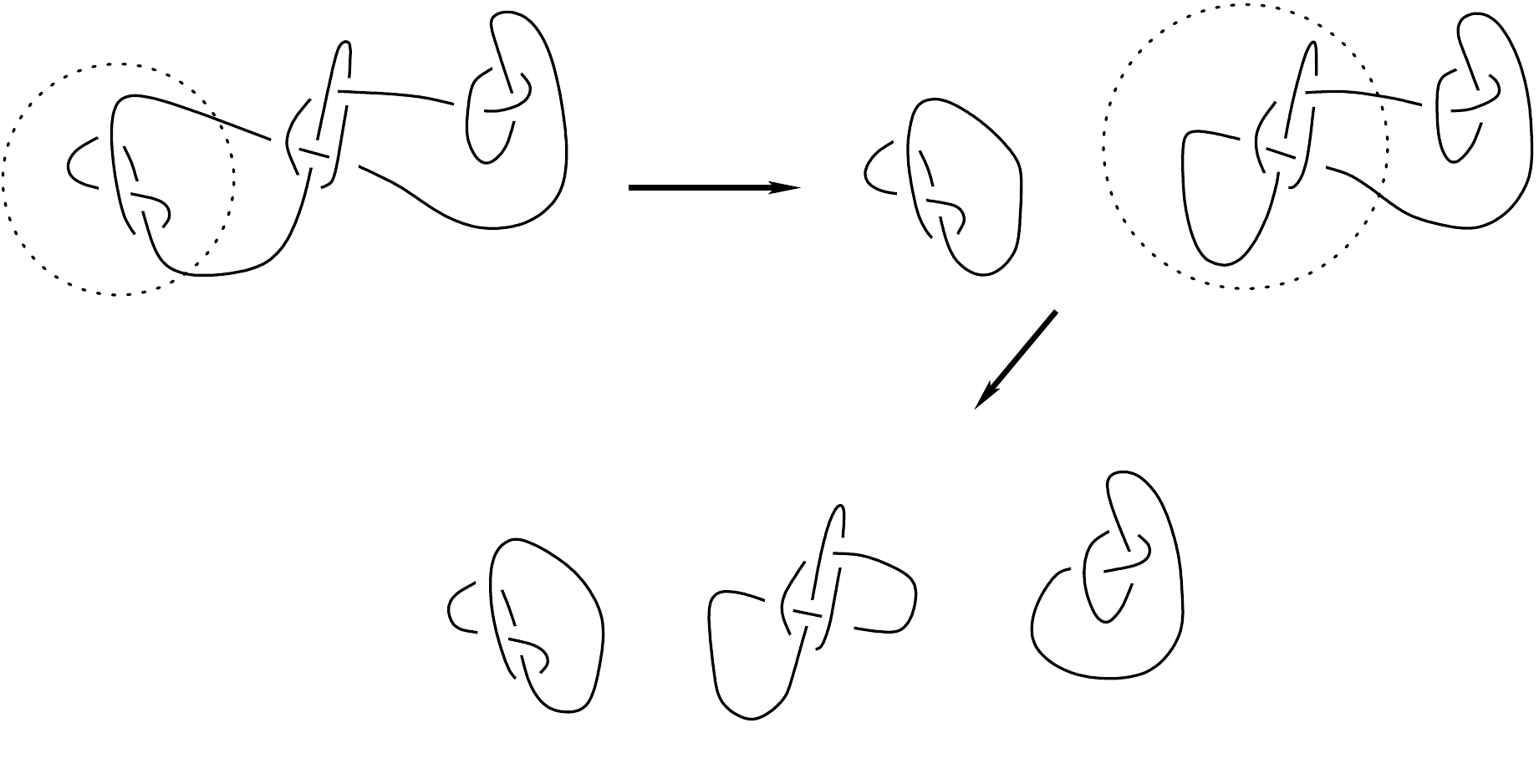_t}}
\caption{Illustration of the proof of Proposition~\ref{pro:pro1}: decomposing a diagram $D$ into strong sub-diagrams.}
\label{fig:Fig63}
\end{figure}

The key observation is that, since $L$ is prime (in Theorem~\ref{thm:main1}, $L$ is either a torus link or a twist knot), then $D\,\rat L$ if and only if there is an $i\in\{1,\ldots,k\}$ such that $D_i\,\rat L$. We now apply the algorithm $\aa(L)$ to $D_i$ for $i=1,\ldots,k$. The amount of time steps required to check whether or not there is an $i\in\{1,\ldots,k\}$ such that $D_i\,\rat L$, and hence to decide whether or not $D\,\rat L$, is then at most $\sum_{i=1}^k p(n_i)$. 

Every polynomial of degree at least $1$ is superadditive, and so if $p(n)$ has degree at least $1$, then this amount of time is at most $p(n)$, and so we are done. In the alternative, $p(n)$ is a constant $c$. In this case, the amount of time is at most $\sum_{i=1}^k p(n_i)=\sum_{i=1}^k c= c\cdot k \le c\cdot n$.

We have proved that if there is a polynomial time algorithm that solves $\rat L$ when the input diagram is strong, then there is a polynomial time algorithm that solves $\rat L$ for an arbitrary input diagram. 

Suppose now that there is a polynomial time algorithm that solves $\rat L$ when the input diagram is strong and has at least $n_0$ crossings. We will show that then there is a polynomial time algorithm that solves $\rat L$ when the input diagram is strong. In view of the previous paragraph, this will prove that there is a poynomial time algorithm that solves $\rat L$.

Thus the assumption is that there is an algorithm $\aa'(L)$, and a polynomial $q(n)$, such that, when the input diagram $D$ is strong and has at least $n_0$ crossings, $\aa'(L)$ decides whether or not $D\,\rat L$ in at most $q(n)$ time steps, where $n$ is the number of crossings in $D$. 

We now remark that for each fixed link $L$, the problem $\rat L$ is decidable. Indeed, let $D$ be any input diagram. Let $\dd(D)$ be the collection of all link diagrams that can be obtained from $D$ by crossing exchanges and smoothings. Thus $D\,\rat L$ can be decided in a finite number of time steps, since (i) $\dd(D)$ is finite; (ii) $L$ is a fixed link; and (iii) the problem of deciding whether or not two links are equivalent is decidable (see for instance~\cite{coward}). Let $\aa''(L)$ be this described algorithm that solves $\rat L$.

For each fixed strong diagram $D$ with at most $n_0$ crossings, let $m(D)$ be the amount of time steps that $\aa''(L)$ takes to decide whether or not $D\,\rat L$. Now let $M:=\max\{m(D)\}$, where the maximum is taken over all strong diagrams with at most $n_0$ crossings. Since there is a finite number of strong diagrams with at most $n_0$ crossings, it follows that $M$ is a well-defined constant. That is, since $n_0$ is fixed, the running time of $\aa''(L)$ is bounded by an absolute constant $M$, when the input is restricted to strong diagrams with at most $n_0$ crossings.

To conclude the proof, let $\overline{\aa(L)}$ be the algorithm that results by combining $\aa'(L)$ and $\aa''(L)$: if the input strong diagram $D$ has at least $n_0$ crossings, then apply $\aa'(L)$ to $D$, and otherwise apply $\aa''(L)$. Then $\overline{\aa(L)}$ is a polynomial time algorithm, since it decides whether or not $D\,\rat L$ in at most $\max\{q(n),M\}< q(n)+M$ time steps.
\end{proof}

\section{Proof of Theorem~\ref{thm:main1}}\label{sec:red2}

The workhorses behind the proof of Theorem~\ref{thm:main1} are the following lemmas, which provide structural characterizations of when a (large) strong diagram $D$ satisfies that $D\,\rat L$, for the cases in which $L$ is a torus link or a twist knot, respectively.

We defer the proofs of these two lemmas for the moment, and give the proof of Theorem~\ref{thm:main1}. The rest of the paper is devoted to the proofs of these lemmas.

\begin{lemma}\label{lem:torus}
For each integer $m\ge 3$, there is an integer $n_1:=n_1(m)$ with the following property. If $D$ is any strong diagram with at least ${n_1}$ crossings, then $D\,\rat T_{2,m}$.
\end{lemma}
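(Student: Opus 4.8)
The plan is the following. Recall that the effect of applying a sequence of crossing exchanges and smoothings to $D$ is captured by a choice of a subset $S$ of the crossings of $D$, a smoothing type at each crossing in $S$, and an over/under assignment at each crossing not in $S$; hence $D\,\rat T_{2,m}$ will follow as soon as we exhibit one such choice whose outcome is a diagram of $T_{2,m}$. The target is to turn $D$, using \emph{smoothings alone}, into the standard diagram of $T_{2,k}$ (the closure of the $2$-braid $\sigma_1^k$) for some $k\ge m$, and then to finish with crossing exchanges: if $k\equiv m\pmod 2$, choosing the over/under data on the surviving crossings so that $k-m$ of the twists cancel in pairs yields $T_{2,m}$, while if $k\not\equiv m\pmod 2$ we first spend one more smoothing to pass from $T_{2,k}$ to $T_{2,k-1}$.

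So the heart of the matter is to prove that \emph{every strong projection with sufficiently many crossings can be reduced, by smoothings, to the standard closed-$2$-braid projection on at least $m$ crossings, remaining connected at every stage.} Maintaining connectedness is essential, since $T_{2,m}$ is nonsplit, and it is precisely here that strongness --- not merely connectedness of $D$ --- is used: strongness forbids an ``essential $2$-cut'' (a simple closed curve meeting $P$ in two noncrossing points with crossings on both sides), and this is what keeps a smoothing from either disconnecting the diagram or splitting off a trivial circle that cannot afterwards be reabsorbed. Concretely I would reformulate the projection through its Gauss code, i.e.\ its chord diagram (suitably interpreted when $D$ is a link rather than a knot): a smoothing becomes a chord deletion together with one of two local surgeries on the underlying circle, the standard $T_{2,m}$ projection becomes the ``all chords pairwise crossing'' diagram, and strongness translates into (slightly more than) connectedness of the chord diagram. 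The reduction we need is then the statement that every connected chord diagram with enough chords can be brought by such deletions to the all-pairwise-crossing diagram on $k\ge m$ chords, of the appropriate parity.

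I expect the whole difficulty to sit in this extremal step. The naive hope --- that a large strong projection must already contain a long ``twist region'' that one simply closes up --- is false: grid-like and weave-like projections are strong, arbitrarily large, and have only bounded twist regions. So the argument must be genuinely Ramsey-type: instead of finding the $T_{2,m}$-pattern as a literal sub-pattern, one finds it as a pattern \emph{reachable by smoothings}, via a pigeonhole/Ramsey argument on the chord diagram --- for instance on its interlacement graph, or on the combinatorics of the faces of $P$ --- guaranteeing that a large connected chord diagram either contains a large interleaved family of chords or contains a configuration that a bounded number of surgeries turns into one. The bulk of the work, and the source of the threshold $n_1(m)$, will be in making this precise while simultaneously verifying that no deletion used along the way disconnects the diagram.
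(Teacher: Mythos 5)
Your overall strategy is sound in spirit---reduce the projection by smoothings to a standard closed-$2$-braid projection on $k\ge m$ crossings (adjusting parity with one extra smoothing if needed), then choose over/under data with crossing exchanges to cancel twists down to $T_{2,m}$---and this is indeed parallel to what the paper does. But the proposal has a genuine gap: the entire burden of the lemma sits in your italicized claim that every strong projection with sufficiently many crossings can be brought by smoothings, staying connected throughout, to the standard $T_{2,k}$ projection, and you do not prove it. You only indicate that a ``Ramsey-type'' argument on the chord diagram or interlacement graph ``should'' exist, after correctly observing that the naive search for a long twist region fails. As written, the extremal step that produces the threshold $n_1(m)$ is missing, so the argument is a plan rather than a proof; moreover, the connectivity bookkeeping you flag as essential (that no smoothing along the way splits the diagram) is also left entirely open, and it is not obvious how to maintain it through an inductive chord-deletion scheme.

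For comparison, the paper closes exactly this hole by changing coordinates to the Tait graph. A smoothing of a crossing corresponds to deleting or contracting the corresponding edge of a Tait graph, and the quoted result of Endo--Itoh--Taniyama (Proposition~\ref{pro:17}) packages this: if a Tait graph of a projection of $L$ is a minor of a Tait graph of the projection of $D$, then $D\,\rat L$; this also absorbs the connectivity/nonsplitness worries. Since the Tait graphs of the standard $T_{2,m}$ projection are the cycle $C_m$ and the bond $B_m$, and strongness of $D$ translates into $2$-connectedness of its Tait graph, the lemma reduces to the purely graph-theoretic fact that a $2$-connected plane graph with enough edges contains $C_m$ or $B_m$ as a minor---which follows from the elementary observation that such a graph or its planar dual has circumference at least $m$ (Observation~\ref{obs:planar}). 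That circumference statement is precisely the ``reachable-by-smoothings'' pattern you were hoping to extract by Ramsey-type means; in your grid and weave examples the Tait graph visibly has long cycles, so the difficulty you anticipated dissolves in this framework. To repair your write-up you would either have to supply the extremal chord-diagram argument in full (including preservation of connectedness), or adopt the Tait-graph/minor reduction as the paper does.
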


\begin{lemma}\label{lem:twist}
For each integer $m\ge 3$, there is an integer $n_2:=n_2(m)$ with the following property. Let $D$ be any strong diagram with at least $n_2$ crossings. Then $D\,\rat T_{m}$ if and only if the projection $P$ of $D$ is not the projection of a crossing-minimal diagram of a torus link $T_{2,n}$.
\end{lemma}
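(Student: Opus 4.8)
The plan is to establish the two directions separately, reusing the machinery behind Lemma~\ref{lem:torus} as much as possible. For the ``if'' direction, suppose $D$ is a large strong diagram whose projection $P$ is \emph{not} the projection of a crossing-minimal diagram of any torus link $T_{2,n}$. I would first argue that the obstruction identified in Lemma~\ref{lem:torus} (which produces, from any large strong diagram, enough room to realize $T_{2,m}$ after crossing exchanges and smoothings) can be refined to produce a twist knot $T_m$ instead: the twist knot $T_m$ differs from $T_{2,m}$ only by a ``clasp'' versus a ``chain of twists'' in a local tangle, so the same large grid-like or chain-like substructure extracted from a strong diagram with $\ge n_2$ crossings should admit a sequence of crossing exchanges and smoothings landing on a diagram of $T_m$, \emph{provided} the ambient projection is flexible enough — and the hypothesis that $P$ is not a torus-link projection is exactly what rules out the one rigid family where this flexibility fails. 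Concretely, I expect the proof of Lemma~\ref{lem:torus} to first reduce an arbitrary large strong $P$ to containing a long ``chain of bigons'' or a long path along which Reidemeister-II-type smoothings can be chained; along such a structure one can manufacture either $T_{2,m}$ or $T_m$, so the only way $D\,\rat T_m$ could fail is if $P$ has no room at all for the extra clasp, i.e.\ $P$ is itself (isomorphic, as a $4$-valent plane graph) to the standard $m$-crossing projection of some $T_{2,n}$, which for $n\ge n_2$ is just the cycle of $n$ bigons.

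For the ``only if'' direction, I would show that if $P$ \emph{is} the projection of a crossing-minimal diagram of a torus link $T_{2,n}$ — i.e.\ $P$ is the standard closed chain of $n$ bigons — then \emph{no} choice of crossing exchanges and smoothings on any diagram with that projection yields a diagram of $T_m$. The key point is that this projection is extremely rigid: every crossing lies on exactly two bigons, so a smoothing at a crossing either (i) splits off a trivial loop (one of the two smoothings of a bigon crossing) or (ii) merges two adjacent bigons into a longer bigon, and a crossing exchange just toggles over/under. Thus from a diagram on this projection, the set of links reachable by crossing exchanges and smoothings is contained in the family of $(2,k)$-torus links and their connected sums with unknots / split unknots — more precisely, one checks by a short induction on $n$ that every such reachable diagram is (a diagram of) some $T_{2,k}$ with $k\le n$ (including the unknot and two-component unlink as degenerate cases), and one also uses that $D$ is a link in $S^3$, nonsplit, so the relevant normalization applies. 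Since no twist knot $T_m$ with $m\ge 3$ is a $(2,k)$-torus link (the twist knots that happen to be torus knots are only the trefoil $T_1 = 3_1$ and figure-eight-adjacent small cases, which are excluded by $m\ge 3$ and by $m$ being the crossing number, $m\ge 3$, together with the standard fact $T_3 = 5_2 \ne T_{2,k}$, $T_4 = 6_1 \ne T_{2,k}$, etc.), we conclude $D\,\nrat T_m$. I should be slightly careful at $m=3$: here $T_3$ is the $5_2$ knot, which is not a torus link, so the statement is fine, but I would double-check the indexing convention for $T_m$ used in Figure~\ref{fig:Fig57} against the crossing-number claim.

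The main obstacle, and where most of the work will go, is the ``if'' direction — specifically, upgrading the extraction argument behind Lemma~\ref{lem:torus} to control the resulting link type well enough to hit $T_m$ on the nose rather than merely \emph{some} nontrivial link. The delicate part is that a strong projection can fail to be a $T_{2,n}$-projection for ``trivial'' local reasons (a single vertex with more than two distinct incident bigons, or a non-bigon face) while still being globally close to a bigon chain; I will need a clean combinatorial dichotomy on strong $4$-valent plane graphs with many vertices: either $P$ is the standard bigon-cycle (the excluded case), or $P$ contains one of a short list of ``gadgets'' — e.g.\ a vertex meeting three bigons, a triangular face adjacent to long bigon chains, or a theta-like subconfiguration — each of which I can show hosts a sequence of exchanges and smoothings producing a $T_m$-diagram after absorbing the remaining crossings into unknotting moves. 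Proving that this gadget list is exhaustive for strong projections, and that each gadget really does yield $T_m$ (not just $T_{2,m}$), is the crux; I expect to borrow heavily from the normal-form analysis already developed for Lemma~\ref{lem:torus} and to phrase the whole argument so that Lemma~\ref{lem:torus} is literally the sub-case where the gadget is absent.
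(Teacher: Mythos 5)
Your ``if'' direction --- which is the entire content of the lemma beyond an easy observation --- is not a proof but a plan, and the plan rests on a mistaken guess about how Lemma~\ref{lem:torus} is established. The paper never extracts ``bigon chains'' or works with gadgets inside the $4$-valent projection; both lemmas run through Tait graphs and the Endo--Itoh--Taniyama criterion (Proposition~\ref{pro:17}: if a Tait graph of a projection of $L$ is a minor of a Tait graph of the projection of $D$, then $D\,\rat L$). For the twist knot $T_m$ the relevant Tait graphs are $C_{m-1}^+$ (a cycle with one doubled edge) and its dual $B_{m-1}^+$, and the crux is the purely graph-theoretic Proposition~\ref{pro:twist}: a $2$-connected plane graph with enough edges that contains neither of these as a minor must be a cycle $C_n$ or a bond $B_n$ --- proved by taking a long cycle $C$ in $G$ or $G^*$ (Observation~\ref{obs:planar}) and noting that any $C$-path creates a $C_{m-1}^+$ minor. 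Since a strong diagram has $2$-connected Tait graphs, and ``$P$ is not a minimal $T_{2,n}$ projection'' translates exactly to ``the Tait graphs are neither cycles nor bonds,'' the contrapositive plus Proposition~\ref{pro:17} finishes the argument. Your proposal contains neither of the two missing ingredients: (1) a precise, proved dichotomy (your ``short list of gadgets'' is never specified, and its exhaustiveness for strong projections is exactly the hard combinatorial statement you defer); and (2) a proof that a given substructure in the projection actually lets exchanges and smoothings land on $T_m$ on the nose --- ``absorbing the remaining crossings into unknotting moves'' is precisely the nontrivial step that Proposition~\ref{pro:17} exists to supply, and you would in effect have to reprove it by hand.

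Two smaller points. Your ``only if'' direction is in the right spirit (the paper dismisses it as easy): from a diagram whose projection is the standard closed chain of bigons, every reachable link is a $T_{2,k}$ (or an unknot/unlink), so no twist knot with crossing number at least~$4$ can arise. But watch the indexing: in this paper $T_m$ is the twist knot with crossing number $m$ (Figure~\ref{fig:Fig57}, $T_6=6_1$), so $T_3$ is the trefoil $=T_{2,3}$, not $5_2$ as you assert; under the paper's convention the case $m=3$ is genuinely exceptional for this direction and cannot be waved away by ``twist knots are not torus links.'' Finally, even granting your framework, the claim that the only strong projection with no room for a ``clasp'' is the bigon cycle is asserted, not argued; as it stands the proposal does not yield the lemma.
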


\begin{proof}[Proof of Theorem~\ref{thm:main1}, assuming Lemmas~\ref{lem:torus} and~\ref{lem:twist}]
Suppose first that $L$ is a torus link $T_{2,m}$, and let $n_1$ be as in Lemma~\ref{lem:torus}. By Proposition~\ref{pro:pro1} it suffices to consider the decision problem $\rat L$ restricted to strong diagrams with at least $n_1$ crossings. By Lemma~\ref{lem:torus}, the decision problem $\rat L$ restricted to these diagrams is evidently solved in polynomial time, as the answer is simply ``yes'' for all such diagrams.

Finally, suppose that $L$ is a twist knot $T_m$, and let $n_2$ be as in Lemma~\ref{lem:twist}. By Proposition~\ref{pro:pro1}, it suffices to consider $\rat L$ restricted to strong diagrams with at least $n_2$ crossings. By Lemma~\ref{lem:twist}, such a diagram $D$ satisfies that $D\,\rat L$ if and only if the projection $P$ of $D$ is not the projection of a crossing-minimal diagram of a torus link $T_{2,n}$. It is easy to see that whether or not $P$ satisfies this last property can be answered in polynomial time (say, from the Gauss code of $P$), and so we are done.
\end{proof}

\section{The key tool for the proofs of Lemmas~\ref{lem:torus} and~\ref{lem:twist}: Tait graphs and graph minors}

With the work in Sections~\ref{sec:red1} and~\ref{sec:red2}, we have reduced Theorem~\ref{thm:main1} to Lemmas~\ref{lem:torus} and~\ref{lem:twist}, and thus our remaining goal is to prove these lemmas. A crucial tool behind the proofs of these lemmas is a powerful result established in~\cite{endo}, relating Tait graphs of link projections to the question of whether or not $D\,\rat L$, for a given link $L$ and a given diagram $D$. 

\subsection{Tait graphs}

The notion of a Tait graph is illustrated in Figure~\ref{fig:Fig6}. 

\begin{figure}[ht!]
\scalebox{1.0}{\input{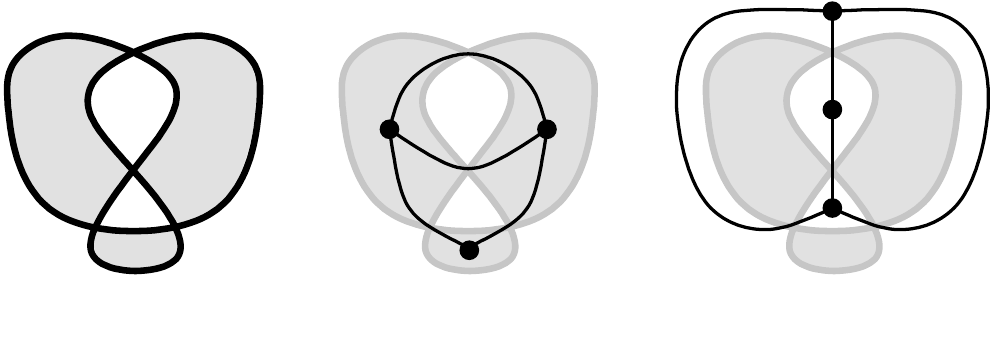_t}}
\caption{On the left hand side of this figure we illustrate the projection of a crossing-minimal diagram of a figure-eight knot. In (b) and (c) we illustrate the corresponding Tait graphs.}
\label{fig:Fig6}
\end{figure}

Let $D$ be a diagram, and let $P$ be its projection. As explained in~\cite{hararykauffman}, we start by performing a checkerboard (gray and white) colouring of the faces of $P$. From the gray faces we obtain one Tait graph, as illustrated in Figure~\ref{fig:Fig6}(b), and from the white faces we obtain the other Tait graph, as illustrated in (c). Note that these plane graphs are dual of each other and, although not in our cases of interest, they may actually be the same graph. We remark that the number of crossings of $P$ is the number of edges of each of its Tait graphs.

We recall that a graph $H$ is a {\em minor} of a graph $G$ if $H$ is a subgraph of a graph obtained by performing edge contractions on $G$. In our current setting of plane graphs, we remark that all edge contractions are performed on {$\sphere^2$}, so that they respect the embedding of the graph on which we perform the contractions. Also, a subgraph of a plane graph is obtained by removing edges and/or vertices, without altering the embedding of the remaining edges and vertices.

\subsection{The key tool}

The following statement is the workhorse behind the proofs of Lemmas~\ref{lem:torus} and~\ref{lem:twist}.

\begin{proposition}[{\cite[Proposition 1.7]{endo}}]\label{pro:17}
Let $L$ be a link, and let $P_L$ be a projection of $L$. Let $D$ be a diagram, and let $P$ be its projection. Suppose that there is a Tait graph of $P_L$ that is a minor of a Tait graph of $P$. Then $D\,\rat L$.
\end{proposition}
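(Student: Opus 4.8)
The plan is to chain two elementary facts: a crossing exchange leaves the projection of a diagram unchanged, while a crossing smoothing, interpreted on a Tait graph, deletes or contracts a single edge. First I would record the easy direction: since exchanging a crossing only flips over/under information, from any diagram one can reach, using crossing exchanges alone, every diagram with the same projection. In particular, once a sequence of smoothings applied to $D$ has produced a diagram whose projection equals $P_L$, finitely many further crossing exchanges turn it into the diagram of $L$ with projection $P_L$ (such a diagram exists because, by hypothesis, $P_L$ is a projection of $L$). So it suffices to produce, from $D$ by smoothings alone, some diagram whose projection is $P_L$.

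Second, I would analyze smoothings on Tait graphs. Fix a checkerboard colouring of $P$, let $G$ and $G^{*}$ be the resulting mutually dual Tait graphs, and recall that $P$ is the medial graph of $G$ (equivalently of $G^{*}$), so a plane graph determines the projection of which it is a Tait graph. At a crossing $c$ the four corners alternate in colour; of the two smoothings at $c$, one merges the two gray corners and the other merges the two white corners (Figure~\ref{fig:Fig1}). On Tait graphs this reads: the first smoothing contracts the edge $c$ of $G$ and deletes it from $G^{*}$, the second deletes $c$ from $G$ and contracts it in $G^{*}$. Therefore any finite sequence of edge deletions and edge contractions applied to $G$ can be realized by a matching sequence of smoothings on $D$, and the projection of the diagram so obtained is the medial graph of the resulting Tait graph.

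Now I would bring in the hypothesis. After relabelling, a Tait graph $G_L$ of $P_L$ is a minor of the gray graph $G$; moreover $G$ is connected (as $P$ is), and $G_L$ is connected with at least one vertex (as $L$ is nonsplit, so $P_L$ is connected). I would then use the elementary graph-theoretic fact that a minor with nonempty vertex set of a connected graph can be obtained using only edge contractions and edge deletions: starting from a minor model, enlarge the branch sets along edges of $G$ until they exhaust the vertex set (possible by connectedness), then contract a spanning tree of each branch set and delete the remaining edges; furthermore, since $G_L$ is connected and spans the contracted graph, these deletions can be ordered so that no bridge of the current graph is ever deleted. Performing on $D$ the smoothings that realize this sequence of edge operations yields, by the previous paragraph, a diagram $D'$ having $G_L$ as a Tait graph, hence with projection the medial graph of $G_L$, which is $P_L$; crossing exchanges on $D'$ then give a diagram of $L$, so $D\,\rat L$.

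The delicate point, which I expect to be the only real obstacle, is that the clean correspondence of the second paragraph degenerates when the edge to be treated is a loop or a bridge of the current Tait graph — that is, when the corresponding crossing is reducible in the current diagram. In those cases only one of the two smoothings keeps the diagram connected, and ``contracting'' a loop coincides with ``deleting'' it; one must therefore choose the smoothing with care and never call for the deletion of a bridge. The no-bridge-deletion ordering of the third paragraph is exactly what makes this possible, and it is available precisely because $L$ is nonsplit, so that $G_L$ is connected and spans the contracted graph. Once this is checked, every intermediate diagram stays connected and the argument runs as sketched.
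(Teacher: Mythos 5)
Your argument is correct, but note that the paper itself does not prove Proposition~\ref{pro:17}: it is quoted verbatim from Endo--Itoh--Taniyama \cite{endo}, and your reconstruction is essentially the argument behind that result (each smoothing deletes the corresponding edge in one Tait graph and contracts it in the other, so a deletion/contraction sequence realizing the minor produces a diagram with projection $P_L$, after which crossing exchanges fix the over/under data). The only point worth making explicit is that you need the paper's convention that minors of Tait graphs are taken as \emph{plane} minors (contractions performed on $\sphere^2$, respecting the embedding); with an abstract minor the final Tait graph could be embedded differently and its medial need not be $P_L$, whereas with the plane convention your branch-set bookkeeping, including the observation that no deleted edge is ever a bridge because $G_L$ is a connected spanning subgraph after the contractions, goes through as written.
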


The difficult direction of Lemmas~\ref{lem:torus} and~\ref{lem:twist} is that we have a link $L$, and we need to understand the structure of diagrams $D$ such that $D\,\rat L$. Proposition~\ref{pro:17} is then quite useful: if a Tait graph of the projection of $D$ contains as a minor a Tait graph of a projection $P_L$ of $L$, then we know that $D\,\rat L$. 

Therefore it is very valuable, given a Tait graph $T$, to understand which graphs contain (equivalently, which graphs do {\em not} contain) $T$ as a minor. This structural characterization (for the Tait graphs of torus links and twist knots) will be the key ingredient in the proofs of Lemmas~\ref{lem:torus} and~\ref{lem:twist}.

To prepare the terrain towards this goal, we finish this section with a few basic graph theory notions, and state an elementary graph theory result that will be very useful.

\subsection{Basic graph theory notions and an auxiliary result}

Let $G$ be a graph, and let $C$ be a cycle of $G$. A {\em chord} of $C$ is an edge whose endvertices that are not adjacent in $C$. A $C$-{\em path} is a path whose endvertices are in $C$, and is otherwise disjoint from $C$. Note that a chord is a particular kind of $C$-path. 

We will make use of the following elementary graph theory fact. We recall that the {\em circumference} of a graph is the length of a longest cycle. As usual, $G^*$ denotes the dual of a plane graph $G$.

\begin{observation}\label{obs:planar}
For each integer $k\ge 2$, there is an integer $k_0$ with the following property. If $G$ is a $2$-connected plane graph with at least $k_0$ edges, then either $G$ or $G^*$ has circumference at least $k$.
\end{observation}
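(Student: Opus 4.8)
The plan is to prove the contrapositive in a quantitative form: if $G$ is a $2$-connected plane graph in which \emph{both} $G$ and $G^*$ have circumference less than $k$, then $G$ has fewer than $k_0$ edges, for a suitable $k_0 = k_0(k)$. The two hypotheses translate into: every cycle of $G$ has length at most $k-1$, and (dually) every minimal edge cut of $G$ — equivalently, every bond, which is a cycle of $G^*$ — has at most $k-1$ edges. So I want to show that a $2$-connected plane graph with bounded circumference and bounded cocircumference (bounded bond size) has a bounded number of edges.

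First I would bound the maximum degree: in a plane graph, the edges incident to a single vertex, together with a tiny circle around that vertex, essentially form an edge cut, so a vertex of degree $d$ gives a bond (or a union of few bonds) of size roughly $d$; since bonds have size $\le k-1$, every degree is bounded by a function of $k$ (say $\le k-1$, being slightly careful about cut-vertices, but $G$ is $2$-connected so this is clean). Next I would bound the number of vertices using the circumference bound together with a breadth-first-search argument: pick any vertex $v_0$ and consider the BFS layers $V_0, V_1, V_2, \dots$ from $v_0$. Since $G$ is connected, if there were a layer $V_r$ with $r \ge k$ that is nonempty, a shortest $v_0$–(that vertex) path has length $\ge k$; but in a $2$-connected graph one can close up a long path into a long cycle (using a second internally disjoint route guaranteed by Menger/fan lemmas, or simply: a $2$-connected graph with a path of length $\ell$ has a cycle of length $\ge \sqrt{\ell}$, or even $\ge \ell/(\text{something bounded})$ once degrees are bounded) — so the BFS depth is bounded by a function of $k$. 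Combining bounded depth with bounded degree bounds $|V(G)|$, and then bounded degree bounds $|E(G)|$, giving the desired $k_0$.

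The step I expect to be the main obstacle is turning a long \emph{path} into a long \emph{cycle} in a way that is genuinely controlled by $k$ alone. The cleanest route is probably to avoid BFS depth entirely and argue as follows: $2$-connectedness gives an ear decomposition $C_0 \subseteq C_0 \cup P_1 \subseteq \cdots \subseteq G$, where $C_0$ is a cycle and each $P_i$ is an ear (a path attached at its two endpoints). Each ear has length $\le k-1$: an ear $P_i$ together with a path in the current subgraph joining its endpoints forms a cycle, and — choosing the shortest such connecting path, or exploiting that the current graph is itself $2$-connected of bounded circumference — that cycle has length $\ge |P_i|$, forcing $|P_i| \le k-1$. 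It remains only to bound the \emph{number} of ears, which equals $|E(G)| - |V(G)| + 1$; but that is exactly the "cycle rank," and I can bound it by the cocircumference condition, since a graph with many independent cycles has a large bond (e.g.\ via the matroid-dual statement that cocircumference $\ge$ something growing in the cycle rank for graphs of bounded circumference, or more concretely: repeatedly contract and delete to expose a large bond). Assembling these pieces — bounded ear length, bounded ear count, and the fact that the initial cycle $C_0$ has length $\le k-1$ — yields $|E(G)| \le (k-1) + (\text{number of ears})\cdot(k-1) \le k_0(k)$, completing the proof. I would fix the precise dependence $k_0(k)$ at the end, as the statement only asks for existence.
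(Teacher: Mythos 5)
Your first step (the star of a vertex in a $2$-connected plane graph is a bond, hence a cycle of $G^*$, so all degrees are at most $k-1$) and your bound on the length of each ear are both fine, and so is the bookkeeping $|E(G)|\le (k-1)\cdot(1+\text{number of ears})$. But this puts the entire weight of the proof on the remaining claim: that a $2$-connected graph with circumference less than $k$ and cocircumference less than $k$ has bounded cycle rank. That claim is never proved. ``Repeatedly contract and delete to expose a large bond'' is not an argument (contractions and deletions can destroy $2$-connectedness, and without $2$-connectedness the claim is false: a bouquet of many triangles through one vertex has huge cycle rank, circumference $3$ and cocircumference $2$), and the ``matroid-dual statement'' you gesture at is essentially the circumference--cocircumference bound for connected matroids (a Lemos--Oxley-type theorem), i.e.\ a genuinely nontrivial result you would have to cite or prove. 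Worse, in the planar setting your missing claim is, up to duality, the observation itself: the cycle rank of $G$ equals $|V(G^*)|-1$ and the cocircumference of $G$ equals the circumference of $G^*$, so ``bounded circumference plus large cycle rank forces large cocircumference'' is just the statement ``a large $2$-connected plane graph has a long cycle in it or in its dual'' applied to $G^*$. As written, the ear-decomposition reduction is circular: all the easy parts are done, and the hard part is assumed.

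For comparison, the paper's own argument keeps exactly your degree-bounding step and then resolves head-on the ``long path into long cycle'' difficulty you flagged, by anchoring at a longest cycle instead of a BFS root: assume all degrees are at most $k-1$ and let $C$ be a longest cycle; if $C$ has fewer than $k$ edges, then (since bounded degree means only boundedly many vertices lie within distance $(k-1)/2$ of the at most $k-1$ vertices of $C$) a large $G$ contains a vertex $u$ at distance at least $(k-1)/2$ from $C$, and $2$-connectedness (the fan lemma) gives a $C$-path through $u$, which has length at least $k-1$ and closes with an arc of $C$ into a cycle of length at least $k$, contradicting the choice of $C$. This is short, elementary and self-contained. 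To salvage your version you would have to either prove the cycle-rank bound directly (which essentially forces an argument like the one just described) or invoke the matroid circumference--cocircumference inequality explicitly as an external theorem; your first, abandoned BFS instinct was in fact much closer to the intended proof than the route you settled on.
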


\ignore{
\begin{proof}
It suffices to show that either $G$ or $G^*$ has a cycle of size at least $k$ as a minor. We start by noting that if $G$ has a vertex $v$ of degree $k$ or larger we are done. To see this, note that in this case the minor of $G$ obtained by contracting all edges not incident with $v$ is a bond with at least $k$ edges, and thus the dual of this bond is a cycle with at least $k$ edges that is a minor of $G^*$.
Thus we may assume that all vertices of $G$ have degree at most $k-1$. Let $C$ be a longest cycle in $G$. If $C$ has at least $k$ edges then we are done, so we suppose that $C$ has $r<k$ edges. We complete the proof by showing that if the number of edges of $G$ is sufficiently large, then there exists a $C$-path $Q$ with at least $k-1$ edges. This indeed completes the proof, as this implies the existence of a cycle in $G$ with at least $k$ edges.
Since each vertex of $G$ has degree at most $k-1$, if the number of edges of $G$ is large enough then there exists a vertex $u$ in $G$ whose distance (length of shortest path) to each vertex in $C$ is at least $(k-1)/2$. The $2$-connectedness of $G$ implies that there is a $C$-path $Q$ that contains $u$ as an internal vertex. Since the distance of $u$ to each vertex in $C$ is at least $(k-1)/2$, it follows that $Q$ has length at least $k-1$, as required.
\end{proof}
}

\section{Proof of Lemma~\ref{lem:torus}}

As illustrated in Figure~\ref{fig:Fig52}, it is easy to verify that if $P$ is the projection of a crossing-minimal diagram of the torus link $T_{2,m}$, then the Tait graphs of $P$ are the cycle $C_m$ and the bond $B_m$. We recall that the bond $B_m$ is the two-vertex graph with $m$ parallel edges joining them.

\begin{figure}[ht!]
\centering
\scalebox{0.7}{\input{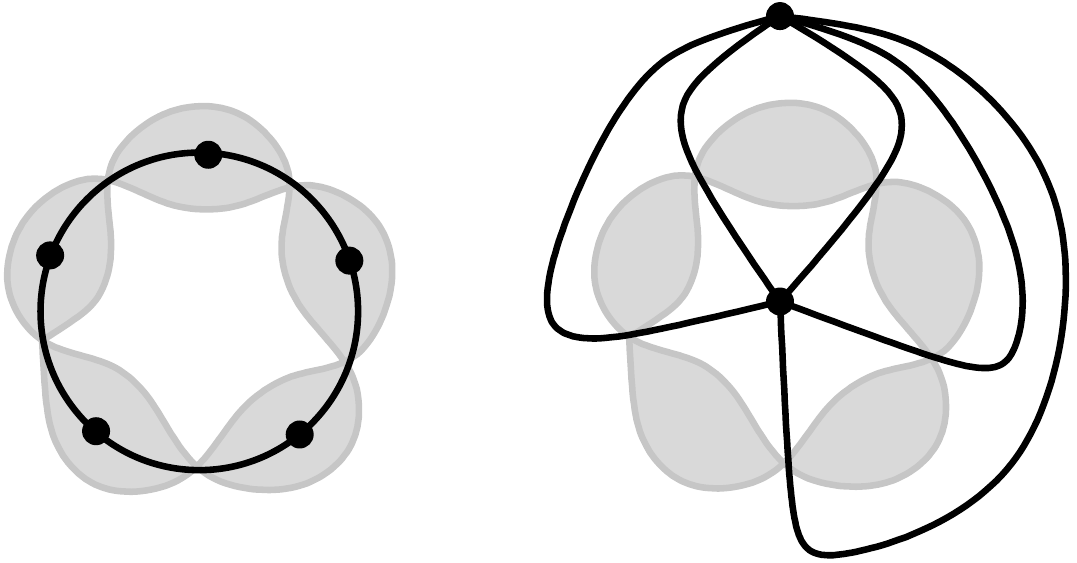_t}}
\caption{The Tait graphs of the torus link $T_{2,5}$ are the $5$-cycle $C_5$ and the $5$-bond $B_m$.}
\label{fig:Fig52}
\end{figure}

\begin{proposition}\label{pro:torus}
For each integer $m\ge 3$, there is an integer $n_1:=n_1(m)$ with the following property. If $G$ is a $2$-connected plane graph with at least $n_1$ edges, then $G$ contains either $C_m$ or $B_m$ as a minor.
\end{proposition}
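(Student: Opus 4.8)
The plan is to invoke Observation~\ref{obs:planar} to guarantee a long cycle in either $G$ or $G^*$, and then upgrade "long cycle" to "contains $C_m$ or $B_m$ as a minor". First, I would set $k := m$ in Observation~\ref{obs:planar} and let $k_0$ be the resulting constant; taking $n_1 := k_0$, if $G$ is a $2$-connected plane graph with at least $n_1$ edges, then either $G$ or $G^*$ has circumference at least $m$. If $G$ itself has a cycle $C$ of length at least $m$, then contracting all edges of $C$ except $m-1$ of them (and deleting the rest of $G$) exhibits $C_m$ as a minor, so we are done. Symmetrically, if $G^*$ has a cycle of length at least $m$, then $G^*$ contains $C_m$ as a minor; since contracting an edge in $G^*$ corresponds to deleting an edge in $G$ and vice versa, a $C_m$-minor in $G^*$ is precisely a $B_m$-minor in $G$ (the dual of the $m$-cycle is the $m$-bond), and again we are done.

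Strictly speaking I should be a little careful about the fact that a minor-of-a-minor argument and the contraction/deletion duality require $2$-connectedness to be preserved, or at least not required, along the way: the cleanest route is to observe that $C_m$ is a minor of any graph with a cycle of length $\ge m$ with no hypotheses at all (just take the cyclic subgraph and contract), so I do not need to track connectivity after the first step. The duality statement "$H$ is a minor of $G^*$ if and only if the dual notion of $H$ is a minor of $G$" for plane graphs is standard; here I only need the single instance $H = C_m$, whose plane dual is $B_m$, and I will state it as such. If one prefers to avoid invoking planar minor–dual duality as a black box, one can instead argue directly: a cycle of length $\ell \ge m$ in $G^*$ is a minimal edge cut of size $\ell$ in $G$ separating $G$ into two sides; contracting each side to a single vertex and keeping $m$ of the cut edges yields $B_m$ as a minor of $G$.

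I expect this proposition to be essentially immediate given Observation~\ref{obs:planar}; there is no real obstacle. The only point that needs a sentence of justification is the passage from "$G^*$ has a long cycle" to "$G$ has a $B_m$ minor", i.e. the plane-duality of the operations, and I would handle it with the edge-cut description above so that the argument is self-contained and does not lean on connectivity assumptions surviving the reduction. Note also that we do not even use the full strength of Observation~\ref{obs:planar}'s circumference conclusion beyond extracting one long cycle, and we do not need $C$ to be a longest cycle — any cycle of length at least $m$ suffices.
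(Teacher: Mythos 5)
Your proposal is correct and follows essentially the same route as the paper: invoke Observation~\ref{obs:planar} with $k=m$ to get a long cycle in $G$ or $G^*$, and then use the plane duality between $C_m$-minors and $B_m$-minors (which the paper states as ``a graph has $C_m$ as a minor if and only if its dual has $B_m$ as a minor''). Your extra care in justifying the duality step via the cycle--bond correspondence is a welcome elaboration but not a different argument.
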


\begin{proof}
We start by noting that a graph has $C_m$ as a minor if and only if its dual has $B_m$ as a minor. Thus it suffices to show that either $G$ or $G^*$ has $C_m$ as a minor.

By Observation~\ref{obs:planar}, if the number of edges of $G$ is sufficiently large, then there is an $H\in\{G,G^*\}$ that has a cycle of length at least $m$. From this it follows that either $G$ or $G^*$ has $C_m$ as a minor.
\end{proof}

\begin{proof}[Proof of Lemma~\ref{lem:torus}]
Let $m\ge 3$ be an integer. Let $D$ be a strong diagram with at least $n_1$ edges, where $n_1$ is as in Proposition~\ref{pro:torus}. Let $P$ be the projection of $D$, and let $G$ be a Tait graph of $P$. Since $D$ is strong and has at least $n_1$ crossings, then the same holds for $P$, and so $G$ has at least $n_1$ edges and is $2$-connected. By Proposition~\ref{pro:torus}, $G$ contains either $C_m$ or $B_m$ as a minor. Since $C_m$ and $B_m$ are the Tait graphs of a crossing-minimal diagram of $T_{2,m}$, by Proposition~\ref{pro:17} it follows that $D\,\rat T_{2,m}$.
\end{proof}

\section{Proof of Lemma~\ref{lem:twist}}

In Figure~\ref{fig:Fig40}(a) and (b) we show a projection $P$ of a crossing-minimal diagram of the twist knot $T_6=6_1$, and the corresponding Tait graphs of $P$. In (c) and (d) we draw these two Tait graphs in a more visually appealing way. One of these Tait graphs is a $5$-cycle plus a parallel edge added to one of its edges (we call this graph $C_5^+$), and the other Tait graph is a $5$-bond with one edge subdivided exactly once (we call it $B_5^+$).  

In general, if $P$ is the projection of a crossing-minimal diagram of the twist knot $T_m$, then the Tait graphs of $P$ are (i) the graph $C_{m-1}^+$ obtained by taking a cycle $C_{m-1}$ and adding exactly one parallel edge; or (ii) the graph $B_{m-1}^+$ obtained by taking a bond $B_{m-1}$ and subdividing one edge exactly once.

\begin{figure}[ht!]
\centering
\scalebox{0.5}{\input{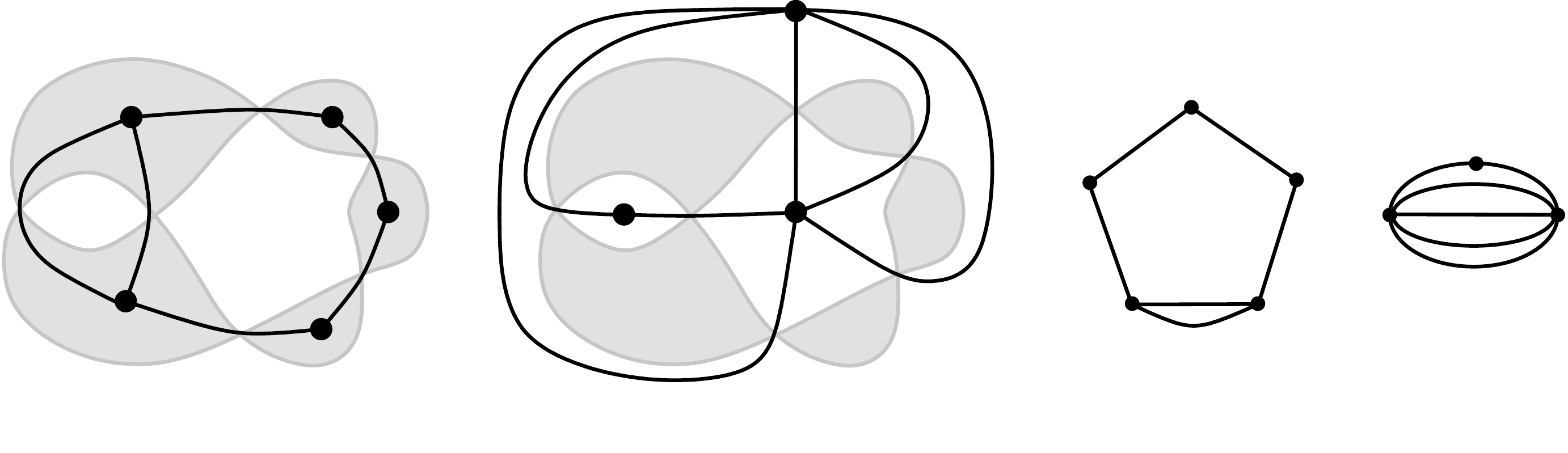_t}}
\caption{The Tait graphs of the twist knot $T_{6}=6_1$ are the graph $C_5^+$ obtained from a $5$-cycle by adding a parallel edge to one edge, as shown in (c), and its dual $B_5^+$, illustrated in (d), which is obtained from a $5$-bond by subdividing one edge exactly once.}
\label{fig:Fig40}
\end{figure}

\begin{proposition}\label{pro:twist}
For each integer $m\ge 3$, there is an integer $n_2:=n_2(m)$ with the following property. Let $G$ be a $2$-connected plane graph with $n\ge n_2$ edges, that contains neither $C_{m-1}^+$ nor $B_{m-1}^+$ as a minor. Then $G$ is either the cycle $C_n$ or the bond $B_n$.
\end{proposition}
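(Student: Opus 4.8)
The plan is to extract a purely graph-theoretic core statement, with no planarity involved, and then feed it to $G$ and to $G^{*}$ via planar duality. Set $k:=2(m-2)$, which is at least $2$ since $m\ge 3$, let $k_{0}$ be the constant supplied by Observation~\ref{obs:planar} for this $k$, and put $n_{2}:=k_{0}$.

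\textbf{Core claim.} \emph{If $H$ is a loopless $2$-connected graph whose circumference is at least $2(m-2)$ and which has no $C_{m-1}^{+}$ minor, then $H$ is a cycle.} I would prove it as follows. Fix a cycle $C$ of $H$ with $|C|\ge 2(m-2)$ and suppose, towards a contradiction, that $H\neq C$, so $H$ has an edge not lying on $C$. Since $H$ is $2$-connected there must be a $C$-path $Q$: if some vertex of $H$ lies off $C$, then $2$-connectedness supplies two internally disjoint paths from it to $C$ ending at distinct vertices of $C$, whose union is a $C$-path; otherwise every edge outside $C$ already joins two (distinct, as $H$ is loopless) vertices of $C$ and is thus itself a $C$-path. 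Let $a,b$ be the endpoints of $Q$. Then $C$ and $Q$ together contain three internally disjoint $a$--$b$ paths: the two arcs $C_{1},C_{2}$ into which $a,b$ split $C$, of lengths $\ell_{1},\ell_{2}\ge 1$ with $\ell_{1}+\ell_{2}=|C|\ge 2(m-2)$, and $Q$, of length $q\ge 1$. Relabel so that $\ell_{1}\ge m-2$. Now contract $C_{2}$ down to a single edge, contract $Q$ down to a single edge, and contract $C_{1}$ down to a path of length exactly $m-2$; the resulting minor of $H$ is two vertices joined by two single edges and by a path of length $m-2$, which is exactly a cycle $C_{m-1}$ (the length-$(m-2)$ path together with one of the single edges) plus a parallel edge (the other single edge), i.e.\ $C_{m-1}^{+}$. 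This contradicts the hypothesis, so $H=C$.

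Granting the core claim, the proposition is immediate. Let $G$ be as in the statement. Since $G$ has at least $n_{2}=k_{0}$ edges and is $2$-connected and plane, Observation~\ref{obs:planar} gives that $G$ or $G^{*}$ has circumference at least $k=2(m-2)$; and the dual of a $2$-connected plane graph is again $2$-connected and loopless (it has no bridge since $G$ is $2$-connected), so in either event we have a loopless $2$-connected graph of circumference at least $2(m-2)$. If it is $G$: $G$ has no $C_{m-1}^{+}$ minor by hypothesis, so the core claim forces $G$ to be a cycle, i.e.\ $G=C_{n}$. If it is $G^{*}$: note that $C_{m-1}^{+}$ and $B_{m-1}^{+}$ are plane duals of each other --- they are the two Tait graphs of a common projection, see Figure~\ref{fig:Fig40} --- so from the hypothesis that $G$ has no $B_{m-1}^{+}$ minor and the standard behaviour of minors under planar duality (already used in the proof of Proposition~\ref{pro:torus}) we get that $G^{*}$ has no $(B_{m-1}^{+})^{*}=C_{m-1}^{+}$ minor; the core claim then forces $G^{*}$ to be a cycle $C_{n}$, and dualizing back gives $G=B_{n}$.

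\textbf{Main obstacle.} The one genuinely substantive step is the assertion inside the core claim that a $2$-connected graph which is not a cycle has a $C$-path for \emph{every} long cycle $C$; once that is in place, extracting $C_{m-1}^{+}$ from the resulting three internally disjoint paths is mechanical. The rest is bookkeeping: that $a\neq b$, so that one really obtains three internally disjoint paths (this is where looplessness, valid for Tait graphs of strong projections, enters), that $\ell_{1}+\ell_{2}\ge 2(m-2)$ forces $\max\{\ell_{1},\ell_{2}\}\ge m-2$, and that the small-$m$ degeneracies (for instance $C_{2}^{+}=B_{3}$ and $C_{2}=B_{2}$) remain consistent with the ``$C_{n}$ or $B_{n}$'' conclusion.
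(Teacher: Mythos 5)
Your proposal is correct and takes essentially the same route as the paper's proof: invoke Observation~\ref{obs:planar} with $k=2(m-2)$ to obtain a long cycle $C$ in $G$ or $G^*$, observe that if that graph is not the cycle itself there is a $C$-path $Q$ whose longer arc has at least $m-2$ edges, so $C\cup Q$ yields a $C_{m-1}^+$ minor (contradiction), and transfer the hypotheses and conclusion between $G$ and $G^*$ by planar duality. Your write-up simply makes explicit a few steps the paper leaves implicit, namely the existence of the $C$-path via $2$-connectedness and the behaviour of minors under duality.
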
 

\begin{proof}
Since $C_{m-1}^+$ is the dual of $B_{m-1}^+$, the assumption implies that $G^*$ contains neither $C_{m-1}^+$ nor $B_{m-1}^+$ as a minor. Since the dual of $C_n$ is $B_n$, it suffices to show that one of $G$ and $G^*$ is $C_n$. 

Let $n_2$ be an integer such that every plane $2$-connected graph satisfies that either it or its dual contains a cycle of size at least $2(m-2)$. The existence of $n_2$ is guaranteed by Observation~\ref{obs:planar}. Suppose that $G$ has $n\ge n_2$ edges. Then there is an $H\in\{G,G^*\}$ that has a cycle $C$ with at least {$2(m-2)$} edges. 

If $H=C$ then $H=C_n$, and we are done. Suppose then that $H\neq C$. Then there must exist a $C$-path $Q$. Let $u,v$ be the endvertices of $Q$. Let $Q_1$ and $Q_2$ be the paths in $C$ that have $u$ and $v$ as endvertices, labelled so that $Q_1$ has at least as many edges as $Q_2$. See Figure~\ref{fig:Fig45}(a). Thus $Q_1$ has at least $m-2$ edges.  As illustrated in (b), then $C\cup Q$, and hence $G$, has $C_{m-1}^+$ as a minor, a contradiction.
\end{proof}

\begin{figure}[ht!]
\centering
\scalebox{0.7}{\input{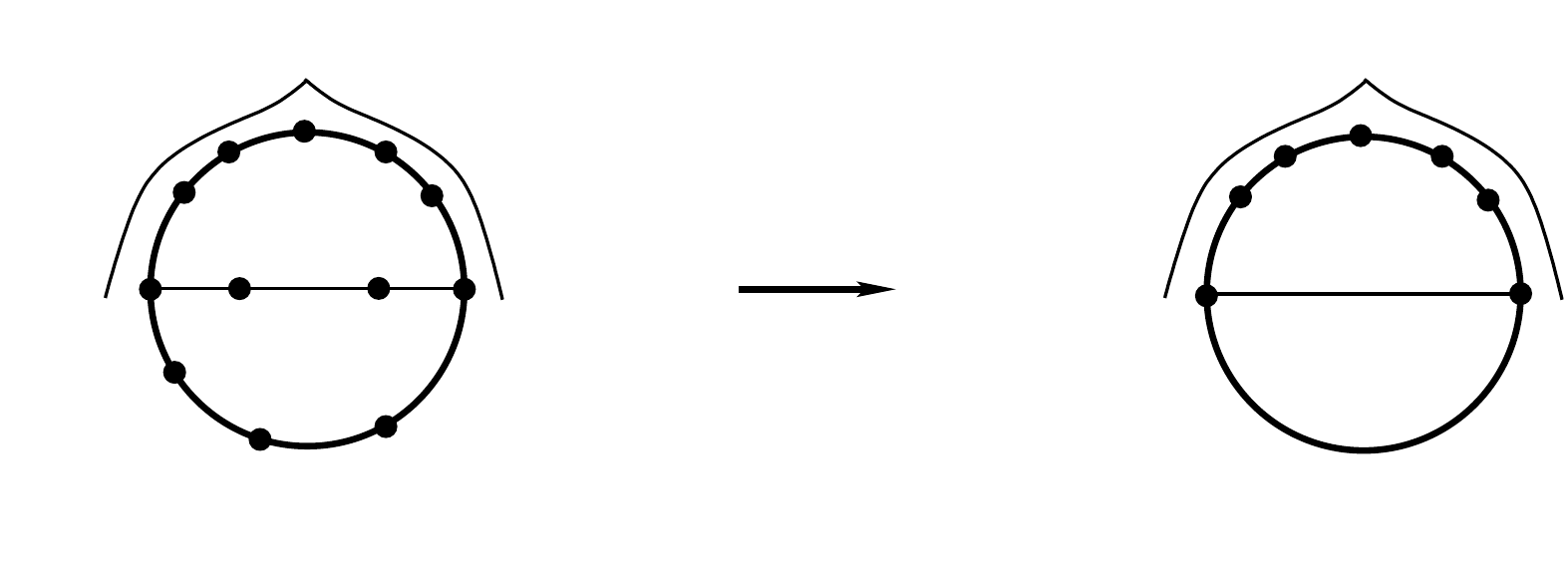_t}}
\caption{Illustration of the proof of Proposition~\ref{pro:twist}.}
\label{fig:Fig45}
\end{figure}

\begin{proof}[Proof of Lemma~\ref{lem:twist}]
Let $m\ge 3$ be an integer. Let $D$ be a strong diagram with at least $n_2$ edges, where $n_2:=n_2(m)$ is as in Proposition~\ref{pro:twist}. Let $P$ be the projection of $D$. It is easy to see that if $P$ is the projection of a crossing-minimal diagram of a torus link $T_{2,n}$ then $D\not\leadsto T_m$. Thus it only remains to prove that if $P$ is {\em not} the projection of a crossing-minimal diagram of a torus link $T_{2,n}$, then $D\,\rat T_m$. Note that this assumption on $P$ means that none of the Tait graphs of $P$ is a cycle or a bond.

Let $G$ be a Tait graph of $P$. Since $D$ is strong and has at least $n_2$ crossings, then the same holds for $P$, and so $G$ has at least $n_2$ edges and is $2$-connected. Since $G$ is neither a cycle nor a bond, it follows from (the contrapositive of) Proposition~\ref{pro:twist} that $G$ contains either $C_{m-1}^+$ or $B_{m-1}^+$ as a minor. Since these are the Tait graphs of a projection of $T_m$, it follows from Proposition~\ref{pro:17} that $D\,\rat T_m$.
\end{proof}

\section{An open question}\label{sec:concluding}

If Conjecture~\ref{con:con1} turns out to be true, then the next natural step would be to consider the decision problem in which the link $L$ is not fixed, but it is part of the input:

\medskip

\noindent{\bf Problem: }{\sc $\rat$}

\medskip

\noindent{\bf Instance: }{A link $L$, and a link diagram $D$.}

\medskip

\noindent{\bf Question: }{Is it true that $D\,\rat L$?}

\medskip

What is the computational complexity of this decision problem?

\begin{bibdiv}
\begin{biblist}

\bib{abe}{article}{
   author={Abe, Tetsuya},
   author={Hanaki, Ryo},
   author={Higa, Ryuji},
   title={The unknotting number and band-unknotting number of a knot},
   journal={Osaka J. Math.},
   volume={49},
   date={2012},
   number={2},
   pages={523--550},
}

\bib{abek}{article}{
   author={Abe, Tetsuya},
   author={Kanenobu, Taizo},
   title={Unoriented band surgery on knots and links},
   journal={Kobe J. Math.},
   volume={31},
   date={2014},
   number={1-2},
   pages={21--44},
}

\bib{fertility}{article}{
   author={Cantarella, Jason},
   author={Henrich, Allison},
   author={Magness, Elsa},
   author={O'Keefe, Oliver},  
   author={Perez, Kayla},
   author={Rawdon, Eric},
   author={Zimmer,  Briana},
   title={Knot fertility and lineage},
   journal={J. Knot Theory Ramifications},
   volume={26},
   date={2017},
   number={13},
   pages={1750093, 20},
}

\bib{coward}{article}{
   author={Coward, Alexander},
   author={Lackenby, Marc},
   title={An upper bound on Reidemeister moves},
   journal={Amer. J. Math.},
   volume={136},
   date={2014},
   number={4},
   pages={1023--1066},
}


\bib{endo}{article}{
   author={Endo, Toshiki},
   author={Itoh, Tomoko},
   author={Taniyama, Kouki},
   title={A graph-theoretic approach to a partial order of knots and links},
   journal={Topology Appl.},
   volume={157},
   date={2010},
   number={6},
   pages={1002--1010},
 }

\bib{hanaki}{article}{
   author={Hanaki, Ryo},
   title={Pseudo diagrams of knots, links and spatial graphs},
   journal={Osaka J. Math.},
   volume={47},
   date={2010},
   number={3},
   pages={863--883},
}

\bib{hanaki0}{article}{
   author={Hanaki, Ryo},
   title={Trivializing number of knots},
   journal={J. Math. Soc. Japan},
   volume={66},
   date={2014},
   number={2},
   pages={435--447},
}

\bib{hanaki1}{article}{
   author={Hanaki, Ryo},
   title={On scannable properties of the original knot from a knot shadow},
   journal={Topology Appl.},
   volume={194},
   date={2015},
   pages={296--305},
}

\bib{hararykauffman}{article}{
   author={Harary, Frank},
   author={Kauffman, Louis H.},
   title={Knots and graphs. I. Arc graphs and colorings},
   journal={Adv. in Appl. Math.},
   volume={22},
   date={1999},
   number={3},
   pages={312--337},
 
}

\bib{hoste}{article}{
   author={Hoste, Jim},
   author={Nakanishi, Yasutaka},
   author={Taniyama, Kouki},
   title={Unknotting operations involving trivial tangles},
   journal={Osaka J. Math.},
   volume={27},
   date={1990},
   number={3},
}

\bib{thirtytwo}{article}{
   author={Ito, Noboru},
   author={Takimura, Yusuke},
   title={Thirty-two equivalence relations on knot projections},
   journal={Topology Appl.},
   volume={225},
   date={2017},
   pages={130--138},
 
}

\bib{kan1}{article}{
   author={Kanenobu, Taizo},
   title={Band surgery on knots and links},
   journal={J. Knot Theory Ramifications},
   volume={19},
   date={2010},
   number={12},
   pages={1535--1547},
 }

\bib{kan2}{article}{
   author={Kanenobu, Taizo},
   title={Band surgery on knots and links, II},
   journal={J. Knot Theory Ramifications},
   volume={21},
   date={2012},
   number={9},
   pages={1250086, 22},
 }

\bib{kan3}{article}{
   author={Kanenobu, Taizo},
   title={Band surgery on knots and links, III},
   journal={J. Knot Theory Ramifications},
   volume={25},
   date={2016},
   number={10},
   pages={1650056, 12},
}

\bib{livingston}{misc}{    
author={Livingston, Charles},
title={Chiral smoothings of knots},
note={\tt{https://arxiv.org/abs/1809.07619}},
}

\bib{mariel}{misc}{    
author={Moore, Allison H.},   
author={Vazquez, Mariel}, 
title={Recent advances on the non-coherent band surgery model for site-specific recombination},
note={\tt{https://arxiv.org/abs/1810.08751}},
}

\bib{mariel2}{misc}{    
author={Moore, Allison H.},   
author={Vazquez, Mariel}, 
title={A note on band surgery and the signatura of a knot},
note={\tt{https://arxiv.org/abs/1806.02440}},
}

\bib{sixtwo}{article}{
   author={Takimura, Yusuke},
   title={Regular projections of the knot $6_2$},
   journal={J. Knot Theory Ramifications},
   volume={27},
   date={2018},
   number={14},
   pages={1850081, 31},
}

\bib{taniyama}{article}{
   author={Taniyama, Kouki},
   title={A partial order of knots},
   journal={Tokyo J. Math.},
   volume={12},
   date={1989},
   number={1},
   pages={205--229},
}

\bib{taniyama2}{article}{
   author={Taniyama, Kouki},
   title={A partial order of links},
   journal={Tokyo J. Math.},
   volume={12},
   date={1989},
   number={2},
   pages={475--484},
}

\end{biblist}
\end{bibdiv}

\end{document}